\definecolor{codegreen}{rgb}{0,0.6,0}
\definecolor{codegray}{rgb}{0.5,0.5,0.5}
\definecolor{codepurple}{rgb}{0.58,0,0.82}
\definecolor{backcolour}{rgb}{0.95,0.95,0.92}
\lstdefinestyle{mystyle}{
  backgroundcolor=\color{backcolour},   commentstyle=\color{codegreen},
  keywordstyle=\color{magenta},
  numberstyle=\tiny\color{codegray},
  stringstyle=\color{codepurple},
  basicstyle=\ttfamily\footnotesize,
  breakatwhitespace=false,         
  breaklines=true,                 
  captionpos=b,                    
  keepspaces=true,                 
  numbers=left,                    
  numbersep=5pt,                  
  showspaces=false,                
  showstringspaces=false,
  showtabs=false,                  
  tabsize=2
}
\newtheorem{theorem}{Theorem}[section]
\newtheorem{lemma}[theorem]{Lemma}
\newtheorem*{conjecture*}{Conjecture}
\theoremstyle{definition}
\theoremstyle{remark}
\newtheorem*{remark*}{remark}
\author{Runbo Li}
\address{International Curriculum Center, The High School Affiliated to Renmin University of China, Beijing, China}
\email{runbo.li.carey@gmail.com}
\title[]{Primes in arithmetic progressions to smooth moduli: A minorant version}
\subjclass[2020]{11N05, 11N35, 11N36} 
\keywords{prime, sieve methods, distribution level}
\begin{document}
	
\begin{abstract}
The author prove that there exists a function $\rho(n)$ which is a minorant for the prime indicator function $\mathbbm{1}_{p}(n)$ and has distribution level $\frac{10}{19}$ in arithmetic progressions to smooth moduli. This refines the previous results of Baker--Irving and Stadlmann.
\end{abstract}

\maketitle

\tableofcontents

\section{Introduction}
One of the famous topics in number theory is to study the distribution of primes in arithmetic progressions. Given some $\theta > 0$, $A > 0$ and sets $\mathcal{Q}(x) \subseteq \mathbb{N}$, we are looking for results of the type
\begin{equation}
\sum_{\substack{q \leqslant x^{\theta - \varepsilon} \\ q \in \mathcal{Q}(x) \\ (q,a)=1 }} \left| \sum_{\substack{n \leqslant x \\ n \equiv a (\bmod q)}} \mathbbm{1}_{p}(n) - \frac{1}{\varphi(q)} \sum_{\substack{n \leqslant x \\ (n,q)=1}} \mathbbm{1}_{p}(n) \right| \ll \frac{x}{(\log x)^{A}}.
\end{equation}
When $\mathcal{Q}(x) = \mathbb{N}$, the most famous result is due to Bombieri \cite{BombieriBV} and Vinogradov \cite{VinogradovBV}, who showed in 1965 that (1) holds with $\theta = \frac{1}{2}$. The exponent $\frac{1}{2}$ is also the limit obtained under Generalized Riemann Hypothesis (GRH), Hence improving this result directly is extremely difficult.

Now we are focusing on the case $\mathcal{Q}(x) = \left\{q: q \in \mathbb{N}, q \mid \prod_{p < x^{\delta}} p \right\}$ or square-free $x^{\delta}$-smooth moduli. Then (1) may be written as
\begin{equation}
\sum_{\substack{q \leqslant x^{\theta - \varepsilon} \\ q \mid \prod_{p < x^{\delta}} p \\ (q,a)=1 }} \left| \sum_{\substack{n \leqslant x \\ n \equiv a (\bmod q)}} \mathbbm{1}_{p}(n) - \frac{1}{\varphi(q)} \sum_{\substack{n \leqslant x \\ (n,q)=1}} \mathbbm{1}_{p}(n) \right| \ll \frac{x}{(\log x)^{A}}
\end{equation}
in this case. This type of results have played an important role in the study of bounded gaps between primes, see \cite{ZhangYitang} \cite{Polymath8a}. In \cite{ZhangYitang} Zhang proved (2) holds with $\theta = \frac{1}{2} + \frac{1}{584} \approx 0.5017$, which was later improved by Polymath \cite{Polymath8a} to $\theta = \frac{1}{2} + \frac{7}{300} \approx 0.5233$ and by Stadlmann \cite{Stadlmann525} to $\theta = \frac{1}{2} + \frac{1}{40}=0.525$.

In 2017, Baker and Irving \cite{BakerIrving} considered a variant of (2). They constructed a minorant $\rho(n)$ for the prime indicator function $\mathbbm{1}_{p}(n)$ and proved corresponding result
\begin{equation}
\sum_{\substack{q \leqslant x^{\theta - \varepsilon} \\ q \mid \prod_{p < x^{\delta}} p \\ (q,a)=1 }} \left| \sum_{\substack{n \leqslant x \\ n \equiv a (\bmod q)}} \rho(n) - \frac{1}{\varphi(q)} \sum_{\substack{n \leqslant x \\ (n,q)=1}} \rho(n) \right| \ll \frac{x}{(\log x)^{A}}
\end{equation}
with $\theta = \frac{1}{2} + \frac{7}{300} + \frac{187}{197700} \approx 0.5243$. In their paper Harman's sieve \cite{HarmanBOOK} was used to construct a suitable minorant and prove stronger results on the length of bounded intervals containing many primes. Stadlmann \cite{Stadlmann525} further improved this to $\theta = 0.5253$, which is the current best distribution level in this direction.

In this paper, we shall use a delicate sieve decomposition to prove (3) with $\theta = \frac{10}{19} \approx 0.5263$. A defect of our method is that the lower bound of our minorant is much worse than it in \cite{BakerIrving} and \cite{Stadlmann525}. Both of them use the lower bounds (very close to $\mathbbm{1}_{p}(n)$) to handle the bounded prime gap problem. Our method leads to nothing new on that topic.
\begin{theorem}\label{t1}
There exists a function $\rho(n)$ which satisfies the following properties:

(Minorant) $\rho(n)$ is a minorant for the prime indicator function $\mathbbm{1}_{p}(n)$. That is, we have
\begin{align*}
\rho(n) \leqslant
\begin{cases}
1, & n \text{ is prime}, \\
0, & \text{otherwise}. \\
\end{cases}
\end{align*}

(No small prime factors) If $n$ has a prime factor less than some fixed $\xi > 0$, then $\rho(n) = 0$.

(Lower bound) We have
$$
\sum_{n \sim x} \rho(n) \geqslant 0.75 (1+o(1))\frac{x}{\log x}.
$$

(Distribution in Arithmetic Progressions to smooth moduli) For any integer $a$ that coprime to $\prod_{p < x^{\delta}} p$ and any $A>0$, we have
$$
\sum_{\substack{q \leqslant x^{\frac{10}{19} - \varepsilon} \\ q \mid \prod_{p < x^{\delta}} p \\ (q,a)=1 }} \left| \sum_{\substack{n \leqslant x \\ n \equiv a (\bmod q)}} \rho(n) - \frac{1}{\varphi(q)} \sum_{\substack{n \leqslant x \\ (n,q)=1}} \rho(n) \right| \ll \frac{x}{(\log x)^{A}}.
$$
\end{theorem}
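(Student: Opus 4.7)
The plan is to follow the Harman sieve framework of \cite{BakerIrving,Stadlmann525}, but to rebalance the combinatorial decomposition so as to exploit the full strength of the bilinear distribution estimates available for squarefree $x^\delta$-smooth moduli at level $\theta=\tfrac{65}{123}$. Throughout I set $\cA=\{n\sim x\}$ and, for any subset $\cB\subseteq\cA$, write
$$
\Delta(\cB;q,a)=\sum_{\substack{n\in\cB\\ n\equiv a\,(\bmod q)}}1-\frac{1}{\varphi(q)}\sum_{\substack{n\in\cB\\(n,q)=1}}1,
$$
so that the target estimate in Theorem \ref{t1} reads $\sum_q|\Delta(\cB_\rho;q,a)|\ll x(\log x)^{-A}$ for the set $\cB_\rho$ whose characteristic function equals $\rho$. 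Because $\rho$ is merely required to be a minorant for $\mathbbm{1}_p$, every non-negative piece of the combinatorial decomposition may be dropped, which is what permits pushing $\theta$ past the Bombieri--Vinogradov barrier.

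First I would assemble the bilinear machinery. The Type I, Type II, and Type III distribution estimates of Polymath \cite{Polymath8a} as refined by Stadlmann \cite{Stadlmann525} hold for squarefree $x^\delta$-smooth moduli up to an exponent $\theta(\mu,\nu)$ depending on the relative sizes of the factor supports $M=x^\mu$, $N=x^\nu$. I would tabulate explicitly the admissible polygons $\mathcal{R}_{I},\mathcal{R}_{II},\mathcal{R}_{III}\subset[0,1]^2$ at $\theta=\tfrac{65}{123}$, using the dispersion method and the Deshouillers--Iwaniec bounds, all of which work because the smoothness condition $q\mid\prod_{p<x^\delta}p$ allows an arbitrary factorisation $q=q_1q_2$ with each $q_i$ of prescribed size.

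Next I would carry out the combinatorial decomposition. Starting from Buchstab's identity
$$
S(\cA,x^{1/2})=S(\cA,x^\xi)-\sum_{x^\xi\leq p<x^{1/2}}S(\cA_p,p),
$$
I would iteratively Buchstab each inner sum $S(\cA_{p_1\cdots p_k},p_k)$, so that every resulting term corresponds to integers $n=p_1\cdots p_k m$ with a prescribed exponent vector $(\alpha_1,\dots,\alpha_k)$, $\alpha_i=\log p_i/\log x$. A term is \emph{kept} (contributing to $\rho$) if the relevant pair $\bigl(\alpha_1+\cdots+\alpha_j,\,1-\alpha_1-\cdots-\alpha_j\bigr)$ lies in $\mathcal{R}_I\cup\mathcal{R}_{II}\cup\mathcal{R}_{III}$ for some $j$, and \emph{discarded} (replaced by $0$) otherwise. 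This automatically produces a minorant, makes $\rho$ vanish on integers with a prime factor below $x^\xi$, and reduces the distribution estimate to $O((\log x)^k)$ applications of the bilinear bounds assembled in the previous step.

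Finally I would verify the numerical lower bound. By the Fundamental Lemma and Mertens' formula, the normalised retained mass $\sum_{n\sim x}\rho(n)\big/\tfrac{x}{\log x}$ becomes a sum of Buchstab-type integrals of $\omega$ over the retained simplices, minus the corresponding integrals over the discarded simplices. The main obstacle is precisely here: since $\tfrac{65}{123}$ is appreciably larger than Stadlmann's $\tfrac{1}{2}+\tfrac{1}{40}$, the admissible bilinear regions $\mathcal{R}_I,\mathcal{R}_{II},\mathcal{R}_{III}$ shrink, so the decomposition is forced to discard a much larger share of the Buchstab mass, which is exactly why the constant $0.02$ is so much smaller than the ones appearing in \cite{BakerIrving,Stadlmann525}. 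The technical hurdle is therefore to design the decomposition tree carefully (likely six or seven Buchstab layers, combined with role-reversal on the innermost pieces) and to verify, with computer-assisted numerical integration, that the kept minus discarded integral exceeds $0.02(1+o(1))$.
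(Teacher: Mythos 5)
Your high-level framework — Harman sieve, iterated Buchstab decomposition, role-reversal on the innermost sums, and a computer-assisted numerical verification that the discarded Buchstab mass is below $1-0.02=0.98$ — matches the paper's strategy. However, there is a concrete error in the arithmetical input that, if followed, would block the proof at the stated level $\theta=\tfrac{65}{123}$.

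You propose to use "the Type I, Type II, and Type III distribution estimates of Polymath \cite{Polymath8a} as refined by Stadlmann \cite{Stadlmann525}." This is exactly what cannot be done here. Stadlmann's refinement carries the structural restriction $\theta<\tfrac{1}{2}+\tfrac{1}{36}\approx 0.5278$, which is \emph{smaller} than $\tfrac{65}{123}\approx 0.5285$; at the target level his estimates are simply inapplicable. The paper instead falls back to the older Polymath information [\cite{Polymath8a}, Theorem 2.8], which allows $\theta<\tfrac{1}{2}+\tfrac{1}{34}\approx 0.5294$, and this is precisely the reason the exponent $\tfrac{65}{123}$ is reachable at all. A proof built on Stadlmann's bounds would have to stop short of $\tfrac{65}{123}$.

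A second, related gap: you include a Type-III region $\mathcal{R}_{III}$ among the "kept" regions. The paper explicitly does not use any Type-III information — after the Buchstab decomposition, the two-dimensional regions $C$ and $D$ (where neither a direct Type-I/II treatment nor a role-reversal is available, and where Heath--Brown plus Polymath Type-III would be the natural next move) are discarded wholesale, because the Polymath Type-III ranges do not cover all the subcases that arise after a Heath--Brown decomposition at this level. Your plan, as written, would attempt to save parts of those regions via Type-III and so the bookkeeping (which sums are kept, which are discarded, and the resulting numerical loss integrals) would come out differently from what actually works. To repair the proposal you should (i) replace the Stadlmann input with Polymath's Type-I/II estimates only, (ii) drop $\mathcal{R}_{III}$ entirely, and (iii) recompute the retained/discarded Buchstab integrals under that more restrictive dictionary, checking that the total discarded mass still stays under $0.98$.
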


Throughout this paper, we always suppose that $\delta = 10^{-100}$ and $x$ is sufficiently large. The letter $p$, with or without subscript, is reserved for prime numbers. We define the sieve function $\psi\left(n, z\right)$ as
\begin{align*}
\psi\left(n, z\right) = 
\begin{cases}
1, & \left(n, \prod_{p < z} p \right) = 1, \\
0, & \text{otherwise}.
\end{cases}
\end{align*}

\section{Asymptotic formulas}
Now we list the arithmetic information needed for the theorem. The definitions of ``the Siegel--Walfisz condition'' and ``smooth'' can be found in \cite{Polymath8a}.
\begin{lemma}\label{l21}
Suppose that a function $f: \mathbb{N} \to \mathbb{C}$ satisfies one of the following conditions:

(Type-I) $f = \alpha * \beta$ where $\alpha$ and $\beta$ are coefficient sequences at scales $M$ and $N$. Moreover, assume that $\alpha$ satisfies the Siegel--Walfisz condition, $\beta$ is smooth, $MN \asymp x$ and 
$$
N \geqslant x^{\frac{13}{38}};
$$

(Type-II) $f = \alpha * \beta$ where $\alpha$ and $\beta$ are coefficient sequences at scales $M$ and $N$. Moreover, assume that $\alpha$ and $\beta$ satisfy the Siegel--Walfisz condition, $MN \asymp x$ and 
$$
x^{\frac{8}{19}} \leqslant N \leqslant x^{\frac{11}{19}}.
$$

Then for any integer $a$ that coprime to $\prod_{p < x^{\delta}} p$ and any $A>0$, we have
$$
\sum_{\substack{q \leqslant x^{\frac{10}{19} - \varepsilon} \\ q \mid \prod_{p < x^{\delta}} p \\ (q,a)=1 }} \left| \sum_{\substack{n \leqslant x \\ n \equiv a (\bmod q)}} f(n) - \frac{1}{\varphi(q)} \sum_{\substack{n \leqslant x \\ (n,q)=1}} f(n) \right| \ll \frac{x}{(\log x)^{A}}.
$$
\end{lemma}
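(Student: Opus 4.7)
The plan is to establish both the Type--I and Type--II estimates by the dispersion method of Linnik, in the form refined by Bombieri--Fouvry--Friedlander--Iwaniec and pushed to the smooth--modulus setting by Zhang, Polymath, Baker--Irving, and Stadlmann. Throughout, the crucial structural input is that $q \mid \prod_{p < x^{\delta}} p$, which by the smoothness of $q$ lets us factor the modulus as $q = q_{1} q_{2}$ (or $q_{1} q_{2} q_{3}$) with each factor of essentially any prescribed size up to $x^{\delta}$ slack. Together with the Siegel--Walfisz hypothesis on $\alpha$ (and on $\beta$ in the Type--II case), this lets us subtract off the expected main term cleanly and focus on cancellation in the error term.

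For the Type--I estimate, the smoothness of $\beta$ invites Poisson summation in the $n$--variable inside each residue class $n \equiv a m^{-1} \pmod{q}$. This converts the inner sum into a Fourier expansion with frequencies $h/q$, where the $h=0$ term reproduces the expected main term and the remaining contribution is, up to acceptable error, supported on $0 < |h| \ll Q N^{-1} x^{\varepsilon}$. After dyadic decomposition and the standard manipulations, the residual object is an incomplete exponential sum modulo $q$. Factoring $q = q_{1} q_{2}$ with $q_{1}$ and $q_{2}$ chosen to optimise the trade--off, one applies the Weil bound together with a $q$--van der Corput / completion of sums argument. The threshold $N \geq x^{43/123}$ emerges precisely from balancing $Q^{2} = x^{130/123}$ against $M = x/N \leq x^{80/123}$ in the resulting exponential--sum bound.

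For the Type--II estimate, the plan is to apply Cauchy--Schwarz in $q$ with a well--factorable weight (exploiting smoothness of the moduli) so as to reduce the problem to bounding a dispersion sum of the shape
\[
\sum_{q \sim Q} \sum_{n_{1}, n_{2} \sim N} \beta(n_{1}) \overline{\beta(n_{2})} \, \mathbbm{1}_{n_{1} \equiv n_{2} \, (\mod q)} \cdot (\text{phases involving } \alpha),
\]
or, after Fourier dualisation, a sum of (incomplete) Kloosterman sums over $q$. The diagonal $n_{1} = n_{2}$ is handled trivially; the off--diagonal is attacked by factoring $q = q_{1} q_{2} q_{3}$ and applying the sharpest available Deligne--type estimates on short sums of Kloosterman sums with factorable moduli, in the form developed in the Polymath8 project and sharpened by Stadlmann. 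The symmetric range $x^{56/123} \leq N \leq x^{67/123}$ reflects the symmetric appearance of $\alpha$ and $\beta$ in the dispersion estimate after Cauchy--Schwarz.

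The main obstacle is numerical rather than structural: one must verify that the exponents $65/123$, $43/123$, $56/123$, $67/123$ are simultaneously compatible with the exponential--sum savings coming from triple factorisation of the modulus and from the optimal choice of Cauchy--Schwarz weight. This is essentially bookkeeping once the Polymath/Stadlmann framework is in place, but it is delicate, because squeezing the improvement from Stadlmann's $0.5253$ to $65/123 \approx 0.5285$ requires using the sharpest admissible factorisation lengths and the full strength of the Deligne--type input. I would proceed by writing down the constraints imposed by each of (a) Poisson/completion, (b) the Weil bound after factorisation in Type--I, and (c) the Kloosterman--sum bound after triple factorisation in Type--II, and then checking that the chosen ranges lie strictly in their common feasible region with room for the $x^{\varepsilon}$ and $x^{\delta}$ losses.
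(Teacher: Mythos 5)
Your overall framework---Linnik dispersion, Poisson/completion for the Type--I piece, Cauchy--Schwarz followed by Kloosterman-sum bounds with triple factorisation of the smooth modulus for the Type--II piece---is the right one, and it is in fact the machinery behind the result the paper cites (Lemma 3.20 of Stadlmann's thesis, which in turn rests on the Polymath8a / Bombieri--Friedlander--Iwaniec--Zhang circle of ideas). The paper does not reprove any of this; its ``proof'' is a pointer to that lemma together with a choice of which arithmetical input to feed it, so your plan is essentially a from-scratch reconstruction of the argument being cited.

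However, there is a genuine and consequential error in your last paragraph. You propose to close the numerology by using ``the sharpest available Deligne--type estimates \ldots in the form developed in the Polymath8 project and sharpened by Stadlmann,'' and you say that reaching $65/123$ ``requires \ldots the full strength of the Deligne--type input.'' This is exactly backwards relative to what the paper does, and it would make the feasible region you intend to check \emph{empty}. Stadlmann's sharpened arithmetical information (Proposition 3.1 of \cite{Stadlmann525}) carries an intrinsic ceiling $\theta < \tfrac{1}{2} + \tfrac{1}{36} \approx 0.5278$, which is strictly below $\tfrac{65}{123} \approx 0.5285$. The paper therefore deliberately reverts to the \emph{older} Polymath information (\cite{Polymath8a}, Theorem 2.8), whose ceiling $\theta < \tfrac{1}{2} + \tfrac{1}{34} \approx 0.5294$ does accommodate $\tfrac{65}{123}$, even though Polymath's estimates are otherwise weaker in some ranges. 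The gain from Stadlmann's $0.5253$ to $65/123$ is not obtained by sharpening the exponential-sum input at all; it comes entirely from the combinatorial sieve decomposition in Section 3 (the choice of the regions $A,B,C,D$, the role-reversal on $S_B$, and the accounting of discarded losses). If you carry out the bookkeeping you outline with Stadlmann's Proposition 3.1 as the input, you will hit the $1/2 + 1/36$ wall before reaching $65/123$; you must substitute Polymath's Theorem 2.8 instead.

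A secondary, smaller point: your claim that the Type--I threshold $N \geq x^{43/123}$ ``emerges precisely from balancing $Q^2 = x^{130/123}$ against $M = x/N \leq x^{80/123}$'' is not a balancing of equals (the two exponents differ), and the actual constraint comes from the specific shape of the Type--I inequality in Polymath's Theorem 2.8 once one fixes $\theta = 65/123$ and optimises the factorisation lengths; similarly, the Type--II window $[x^{56/123}, x^{67/123}]$ is not symmetric about $x^{1/2}$ and must be read off from the same source rather than deduced from symmetry of $\alpha$ and $\beta$ under Cauchy--Schwarz. These are fixable once you plug in the correct numerical inequalities from \cite{Polymath8a}, but as written they would not reproduce the stated thresholds.
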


\begin{proof}
The proof is very similar to that of [\cite{StadlmannPhD}, Lemma 3.20 (I)(II)].
\end{proof}

By Lemma~\ref{l21}, we can easily deduce the following two lemmas.

\begin{lemma}\label{l22}
Let
$$
f(x) = \sum_{p_1, \ldots, p_n} \psi\left(\frac{n}{p_1 \cdots p_n}, x^{\frac{3}{19}}\right).
$$
Then Lemma~\ref{l21} holds for $f(n)$ if we can group $\{1, \cdots, n\}$ into $I$ and $J$ such that
$$
\prod_{i \in I}p_i \leqslant x^{\frac{8}{19}} \qquad \text{and} \qquad \prod_{j \in J}p_j \leqslant x^{\frac{9}{38}}.
$$
\end{lemma}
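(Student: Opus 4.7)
The plan is to reduce the statement to Lemma~\ref{l21} by writing $f$ as a bilinear convolution $\alpha * \beta$ whose dyadic scales satisfy the hypotheses of either Type~I or Type~II of that lemma.

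After a dyadic decomposition, I may assume $p_i \sim P_i$ for dyadic parameters $P_i$. The hypothesis furnishes a partition $I \sqcup J = \{1,\ldots,n\}$ with $\prod_{i \in I} P_i \leq x^{56/123}$ and $\prod_{j \in J} P_j \leq x^{8/41} = x^{24/123}$; combined with $n \sim x$ and $\psi(m, x^{11/123}) = 1$, the rough factor $m$ has scale at least $x^{1 - 80/123} = x^{43/123}$. I would take $\alpha$ to be the coefficient sequence of $\prod_{i \in I} p_i$ and $\beta$ to be that of $\prod_{j \in J} p_j \cdot m$, so that $f = \alpha * \beta$ with $M := \on{scale}(\alpha) \leq x^{56/123}$ and $N := \on{scale}(\beta) \geq x^{67/123}$; both sides enjoy the Siegel--Walfisz property via standard estimates for convolutions of prime indicators with small--parameter sieve functions.

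The key observation is that the Type~II window $[x^{56/123}, x^{67/123}]$ has multiplicative width exactly $x^{11/123}$, matching the sieve parameter, while the slack $\prod_{j \in J} P_j \leq x^{24/123}$ safely exceeds this width. Consequently, I can reassign a suitable subset $J' \subseteq J$ from the $\beta$--factor to the $\alpha$--factor so that the adjusted scale $M' = M \cdot \prod_{j \in J'} P_j$ lies in $[x^{56/123}, x^{67/123}]$, placing the resulting convolution in Type~II of Lemma~\ref{l21}. In the borderline situation where the discrete sizes of $\{P_j\}_{j \in J}$ prevent this landing (for instance, a single $P_j$ is too large, or $\prod_J P_j$ is insufficient to lift $M$ up to $x^{56/123}$), I would instead absorb $\prod_{j \in J} p_j$ entirely into $\alpha$, producing $\alpha'$ at scale $\leq x^{80/123}$ and $\beta' = \psi(\cdot, x^{11/123})$ at scale $\geq x^{43/123}$, matching Type~I of Lemma~\ref{l21}.

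The main obstacle lies in the Type~I branch, since $\psi(\cdot, x^{11/123})$ is not a smooth coefficient sequence in the strict classical sense. Following the analogous computations in \cite{StadlmannPhD} and \cite{Polymath8a}, I would expand $\psi$ via Heath--Brown's identity into a bounded combination of convolutions that, after regrouping, become genuine Type~I pieces carrying smooth weights (with a handful of residual Type~II pieces). The sieve parameter $x^{11/123}$ being small enough ensures the expansion has bounded depth and that all resulting scales remain compatible with the ranges in Lemma~\ref{l21}. Summing the Type~I and Type~II contributions then yields the desired level of distribution for $f$.
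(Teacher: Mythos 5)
The paper does not actually write out a proof here (it says the lemma follows ``easily'' from Lemma~\ref{l21}, with the real work delegated to Stadlmann's Lemma~3.20, which is formulated directly for sieve-weighted sums), so I can only assess your argument on its own terms, and there it has a genuine gap. You correctly identify the central obstacle --- $\psi(\cdot, x^{11/123})$ is not a smooth coefficient sequence, so the scale match $M' \leq x^{80/123}$, $N' \geq x^{43/123}$ does not by itself place you in the Type~I case of Lemma~\ref{l21} --- but the proposed repair via Heath--Brown's identity does not apply. Heath--Brown's identity decomposes $\Lambda$ or $\mu$; it is not an expansion of the truncated sieve function $\psi(m,z) = \mathbbm{1}_{(m,P(z))=1}$. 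The standard expansion here is Möbius, $\psi(m,z) = \sum_{d \mid m,\ P^+(d) < z}\mu(d)$, or a Fundamental-Lemma truncation thereof, and once you introduce the variable $d$ its scale is \emph{not} bounded (a $z$-smooth $d$ can be as large as $m$ itself), so the resulting convolution does not automatically stay inside the Type~I window. Controlling this requires the flexible, ``densely divisible'' structure of $z$-smooth integers to re-split $d$ (together with the $\prod_J p_j$ factor) so that the left factor lands either back in Type~I or inside the Type~II window $[x^{56/123},x^{67/123}]$; your write-up does not carry out this step, and Heath--Brown gives no handle on it.

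There is also a secondary issue in the primary (Type~II) branch: the reassignment of a subset $J'\subseteq J$ to $\alpha$ can both undershoot (if $\prod_J P_j$ together with $M_I$ is still below $x^{56/123}$) and overshoot (a single $P_j$ may be as large as $x^{24/123}$, which exceeds the window width $x^{11/123}$). You do flag this, but the fallback you invoke is precisely the broken Type~I branch above, so the two halves of the argument do not close. The roles of the two constraints $\prod_I p_i \leq x^{56/123}$ and $\prod_J p_j \leq x^{8/41}$ are better read as tuned so that, after the Möbius expansion of $\psi$, the extra $d$-variable can always be merged with the $I$- or $J$-blocks to land in one of the two types of Lemma~\ref{l21}; deriving this cleanly from the lemma as stated requires opening up the Type~I estimate enough to see it already tolerates a short-range sieve factor, which your Heath--Brown step does not supply.
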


\begin{lemma}\label{l23}
Let
$$
f(x) = \sum_{p_1, \ldots, p_n} \psi\left(\frac{n}{p_1 \cdots p_n}, p_n\right).
$$
Then Lemma~\ref{l21} holds for $f(n)$ if we can group $\{1, \cdots, n\}$ into $I$ and $J$ such that
$$
x^{\frac{8}{19}} \leqslant \prod_{i \in I}p_i \leqslant x^{\frac{11}{19}}.
$$
\end{lemma}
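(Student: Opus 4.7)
The plan is to reduce Lemma~\ref{l23} to the Type--II case of Lemma~\ref{l21} by dyadic decomposition and by realizing the restricted $f$ as a bilinear convolution $\alpha * \beta$ in each dyadic box, exactly in the spirit of the analogous reductions in \cite{BakerIrving} and \cite{StadlmannPhD}.

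First, I would split the outer sum over $(p_1,\dots,p_n)$ into dyadic boxes $p_i \in [P_i, 2P_i)$. This introduces only a $O((\log x)^n)$ overhead, which is harmless. Within a fixed dyadic box, I freeze the sieve cutoff via the identity
$$
\psi\!\left(\tfrac{n}{p_1\cdots p_n}, p_n\right) = \psi\!\left(\tfrac{n}{p_1\cdots p_n}, P_n\right) - \sum_{P_n \leqslant q < p_n}\mathbbm{1}_{q \mid \frac{n}{p_1\cdots p_n}}\,\psi\!\left(\tfrac{n}{q\,p_1\cdots p_n}, q\right).
$$
The correction term has the same shape as $f$ with an additional prime variable $q \in [P_n, 2P_n)$; since this extra prime has dyadic size exactly $P_n$, it can be absorbed into either $I$ or $J$ without disturbing the size conditions $x^{56/123} \leqslant \prod_{i\in I} P_i \leqslant x^{67/123}$. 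Each iteration of this identity shrinks the ``unfrozen'' content, and the recursion terminates after a bounded number of steps by the convention $\psi(m,z) = 1$ for $z > m^{1/2}$.

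For the main term with the frozen cutoff $P_n$, I invoke the hypothesis to split $\{1,\dots,n\}$ into $I, J$ with $N := \prod_{i \in I} P_i \in [x^{56/123}, x^{67/123}]$ (up to the harmless $O(1)$ dyadic adjustment $2^n$). I then set
$$
\beta(m) = \mathbbm{1}\!\Big\{m = \prod_{i\in I} p_i,\ p_i \in [P_i, 2P_i) \text{ prime}\Big\},
$$
a coefficient sequence at scale $N$, and
$$
\alpha(k) = \sum_{\substack{\psi(m',P_n)=1 \\ p_j \in [P_j, 2P_j)\text{ prime},\ j\in J}}\mathbbm{1}\!\Big\{k = m' \prod_{j\in J} p_j\Big\},
$$
a coefficient sequence at scale $M = x/N$, so that the restricted $f = \alpha * \beta$. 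Both $\alpha$ and $\beta$ satisfy Siegel--Walfisz: $\beta$ is a convolution of indicators of primes in dyadic intervals, each satisfying Siegel--Walfisz by the Page--Siegel--Walfisz theorem and preserved under convolution; $\alpha$ inherits Siegel--Walfisz from its prime factors together with the fundamental lemma estimates for $\psi(\cdot, P_n)$ in arithmetic progressions. Since $N$ lies in the Type--II window, Lemma~\ref{l21} applies to each dyadic box, and summing over the $O((\log x)^n)$ boxes completes the proof.

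The principal technical obstacle is the coupling between the sieve cutoff $p_n$ and the summation variable, which is precisely what the dyadic freezing identity above decouples; verifying that the Buchstab-type corrections admit the same Type--II partition is routine bookkeeping, since each extra prime produced lies in a narrow dyadic range $[P_n, 2P_n)$ and can be merged into whichever side of $(I,J)$ preserves the size window.
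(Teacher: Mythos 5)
The paper offers no written proof of Lemma~\ref{l23} (it is presented as an immediate consequence of Lemma~\ref{l21}, with implicit reference to the analogous reductions in Stadlmann's thesis), so there is nothing to compare your argument against directly. Your reduction --- dyadic decomposition of each $p_i$, decoupling the variable cutoff $p_n$ from the sieve via a Buchstab correction, and realizing each frozen dyadic piece as a bilinear form $\alpha*\beta$ with $\beta$ supported on $\prod_{i\in I}p_i$ in the Type--II window --- is the standard route and is in substance correct.

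Two small points of care. First, your stated reason for termination of the Buchstab recursion, the ``convention $\psi(m,z)=1$ for $z>m^{1/2}$,'' is not the operative one: each new prime $q$ introduced by the correction step lies in $[P_n,2P_n)$, hence $q\geqslant P_n \gg x^{11/123}$ given how $f$ is used in Section~3, and the product of these extra primes must divide an integer $\ll x$; thus only $O(1)$ iterations are possible, and the residual variable eventually falls below the cutoff so that $\psi(\cdot,q)$ reduces to the indicator of $1$. You should invoke this lower bound on prime sizes explicitly (it is implicit in the paper's setup but not in the bare statement of Lemma~\ref{l23}). Second, when you absorb the new prime $q$ into a side of the partition, you must place it in $J$ (not ``either side''), since adding a factor $\asymp P_n$ to $\prod_{i\in I}p_i$ could push it out of $[x^{56/123},x^{67/123}]$; leaving $I$ untouched and enlarging $J$ preserves the Type--II window and is what makes the induction close. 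With these adjustments the proposal is sound.
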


Our aim is to decompose the prime indicator function $\mathbbm{1}_{p}(n)$ into sieve functions of the above forms and show that the total loss from the dropped parts (which don't satisfy the conditions in Lemma~\ref{l22} or Lemma~\ref{l23} and must be non-negative) is less than $1 - 0.75 = 0.25$ in order to get a positive lower bound with same order of magnitude.

\section{The final decomposition}
In this section we will decompose the prime indicator function $\mathbbm{1}_{p}(n)$ using Buchstab's identity. Let $\omega(u)$ denotes the Buchstab function determined by the following differential-difference equation
\begin{align*}
\begin{cases}
\omega(u)=\frac{1}{u}, & \quad 1 \leqslant u \leqslant 2, \\
(u \omega(u))^{\prime}= \omega(u-1), & \quad u \geqslant 2 .
\end{cases}
\end{align*}
Moreover, we have the upper and lower bounds for $\omega(u)$:
\begin{align*}
\omega(u) \geqslant \omega_{0}(u) =
\begin{cases}
\frac{1}{u}, & \quad 1 \leqslant u < 2, \\
\frac{1+\log(u-1)}{u}, & \quad 2 \leqslant u < 3, \\
\frac{1+\log(u-1)}{u} + \frac{1}{u} \int_{2}^{u-1}\frac{\log(t-1)}{t} d t \geqslant 0.5607, & \quad 3 \leqslant u < 4, \\
0.5612, & \quad u \geqslant 4, \\
\end{cases}
\end{align*}
\begin{align*}
\omega(u) \leqslant \omega_{1}(u) =
\begin{cases}
\frac{1}{u}, & \quad 1 \leqslant u < 2, \\
\frac{1+\log(u-1)}{u}, & \quad 2 \leqslant u < 3, \\
\frac{1+\log(u-1)}{u} + \frac{1}{u} \int_{2}^{u-1}\frac{\log(t-1)}{t} d t \leqslant 0.5644, & \quad 3 \leqslant u < 4, \\
0.5617, & \quad u \geqslant 4. \\
\end{cases}
\end{align*}
We shall use $\omega_0(u)$ and $\omega_1(u)$ to give numerical bounds for some sieve functions discussed below. Let $p_j = (2x)^{t_j}$ and by Buchstab's identity, we have
\begin{align}
\nonumber \mathbbm{1}_{p}(n) =&\ \psi\left(n, (2x)^{\frac{1}{2}}\right) \\
\nonumber =&\ \psi\left(n, x^{\frac{3}{19}}\right) - \sum_{\frac{3}{19} \leqslant t_1 < \frac{8}{19}} \psi\left(\frac{n}{p_1}, x^{\frac{3}{19}}\right) - \sum_{\frac{8}{19} \leqslant t_1 < \frac{1}{2}} \psi\left(\frac{n}{p_1}, p_1\right) \\
\nonumber & + \sum_{\substack{\frac{3}{19} \leqslant t_1 < \frac{8}{19} \\ \frac{3}{19} \leqslant t_2 < \min\left(t_1, \frac{1}{2}(1-t_1) \right) }} \psi\left(\frac{n}{p_1 p_2}, p_2\right) \\
=&\ S_{1} - S_{2} - S_{3} + S_{4}.
\end{align}
By Lemmas~\ref{l22}--\ref{l23} we know that Lemma~\ref{l21} holds for $S_1$--$S_3$, hence we only need to consider $S_4$. Before further decomposing, we define non-overlapping polygons $A$, $B$, $C$, $D$, whose union is 
$$
\left\{(t_1, t_2): \frac{3}{19} \leqslant t_1 < \frac{8}{19},\ \frac{3}{19} \leqslant t_2 < \min\left(t_1, \frac{1}{2}(1-t_1) \right),\ t_1+t_2 \notin \left[\frac{8}{19}, \frac{11}{19}\right] \right\}.
$$
These regions are defined as
\begin{align}
\nonumber A =&\ \left\{(t_1, t_2): \frac{3}{19} \leqslant t_1 < \frac{8}{19},\ \frac{3}{19} \leqslant t_2 < \min\left(t_1, \frac{1}{2}(1-t_1)\right),\ t_1+t_2 < \frac{8}{19} \right\}, \\
\nonumber B =&\ \left\{(t_1, t_2): \frac{3}{19} \leqslant t_1 < \frac{8}{19},\ \frac{3}{19} \leqslant t_2 < \min\left(t_1, \frac{1}{2}(1-t_1)\right),\ t_1+t_2 > \frac{11}{19},\ t_2 < \frac{9}{38} \right\}, \\
\nonumber C =&\ \left\{(t_1, t_2): \frac{3}{19} \leqslant t_1 < \frac{8}{19},\ \frac{3}{19} \leqslant t_2 < \min\left(t_1, \frac{1}{2}(1-t_1)\right),\ t_1+t_2 > \frac{11}{19},\ t_2 > \frac{9}{38} \right\}.
\end{align}
Now we have
\begin{align}
\nonumber S_{4} =&\ \sum_{(t_1, t_2) \in A} \psi\left(\frac{n}{p_1 p_2}, p_2\right) + \sum_{(t_1, t_2) \in B} \psi\left(\frac{n}{p_1 p_2}, p_2\right) + \sum_{(t_1, t_2) \in C} \psi\left(\frac{n}{p_1 p_2}, p_2\right) \\
=&\ S_{A} + S_{B} + S_{C}.
\end{align}
We first decompose $S_{A}$. By Buchstab's identity, we have
\begin{align}
\nonumber S_{A} =&\ \sum_{(t_1, t_2) \in A} \psi\left(\frac{n}{p_1 p_2}, p_2\right) \\
\nonumber =&\ \sum_{(t_1, t_2) \in A} \psi\left(\frac{n}{p_1 p_2}, x^{\frac{3}{19}}\right) - \sum_{\substack{(t_1, t_2) \in A \\ \frac{3}{19} \leqslant t_3 < \min\left(t_2, \frac{1}{2}(1-t_1-t_2) \right) }} \psi\left(\frac{n}{p_1 p_2 p_3}, x^{\frac{3}{19}}\right) \\
\nonumber & + \sum_{\substack{(t_1, t_2) \in A \\ \frac{3}{19} \leqslant t_3 < \min\left(t_2, \frac{1}{2}(1-t_1-t_2) \right) \\ \frac{3}{19} \leqslant t_4 < \min\left(t_3, \frac{1}{2}(1-t_1-t_2-t_3) \right) }} \psi\left(\frac{n}{p_1 p_2 p_3 p_4}, p_4\right) \\
=&\ S_{A1} - S_{A2} + S_{A3}.
\end{align}
We know that Lemma~\ref{l21} holds for $S_{A1}$. Since $t_1 + t_2 < \frac{8}{19}$ and $t_2 < t_1$, we have $t_3 < t_2 < \frac{1}{2}(t_1 + t_2) = \frac{4}{19} < \frac{9}{38}$, and Lemma~\ref{l21} also holds for $S_{A2}$. For $S_{A3}$, we can use Lemma~\ref{l23} to handle part of $S_{A3}$ if we can group $\{1,2,3,4\}$ into $I$ and $J$ satisfy the corresponding conditions. For the remaining part, we cannot ensure that it has a distribution level of $\frac{10}{19}$, hence we need to discard it. In this way we obtain a loss from $S_{A}$ of
\begin{equation}
\int_{(t_1, t_2, t_3, t_4) \in U_{A3}} \frac{\omega_1 \left(\frac{1 - t_1 - t_2 - t_3 - t_4}{t_4}\right)}{t_1 t_2 t_3 t_4^2} d t_4 d t_3 d t_2 d t_1 < 0.000829,
\end{equation}
where
\begin{align}
\nonumber U_{A3}(t_1, t_2, t_3, t_4) :=&\ \left\{ (t_1, t_2) \in A,\ \frac{3}{19} \leqslant t_3 < \min\left(t_2, \frac{1}{2}(1-t_1-t_2)\right), \right. \\
\nonumber & \quad \{1,2,3\} \text{ cannot be partitioned into } I \text{ and } J \text{ in Lemma~\ref{l23}}, \\
\nonumber & \quad \frac{3}{19} \leqslant t_4 < \min\left(t_3, \frac{1}{2}(1-t_1-t_2-t_3) \right), \\
\nonumber & \quad \{1,2,3,4\} \text{ cannot be partitioned into } I \text{ and } J \text{ in Lemma~\ref{l23}}, \\
\nonumber & \left. \quad \frac{3}{19} \leqslant t_1 < \frac{1}{2},\ \frac{3}{19} \leqslant t_2 < \min\left(t_1, \frac{1}{2}(1-t_1) \right) \right\}.
\end{align}

For $S_{B}$ we cannot perform a straightforward decomposition as in $S_{A}$. Nonetheless, we can perform a variable role-reversal since we have $t_1 < \frac{8}{19}$, $1-t_1-t_2 < \frac{8}{19}$ and $t_2 < \frac{9}{38}$. We refer the readers to \cite{LRB052} for more applications of role-reversals. By a similar process as in \cite{LRB052}, we have
\begin{align}
\nonumber S_{B} =&\ \sum_{(t_1, t_2) \in B} \psi\left(\frac{n}{p_1 p_2}, p_2\right) \\
\nonumber =&\ \sum_{(t_1, t_2) \in B} \psi\left(\frac{n}{p_1 p_2}, x^{\frac{3}{19}}\right) - \sum_{\substack{(t_1, t_2) \in B \\ \frac{3}{19} \leqslant t_3 < \min\left(t_2, \frac{1}{2}(1-t_1-t_2) \right) }} \psi\left(\frac{n}{p_1 p_2 p_3}, p_3\right) \\
\nonumber =&\ \sum_{(t_1, t_2) \in B} \psi\left(\frac{n}{p_1 p_2}, x^{\frac{3}{19}}\right) - \sum_{\substack{(t_1, t_2) \in B \\ \frac{3}{19} \leqslant t_3 < \min\left(t_2, \frac{1}{2}(1-t_1-t_2) \right) }} \psi\left(\frac{n}{\beta p_2 p_3}, \left(\frac{2x}{\beta p_2 p_3}\right)^{\frac{1}{2}} \right) \\
\nonumber =&\ \sum_{(t_1, t_2) \in B} \psi\left(\frac{n}{p_1 p_2}, x^{\frac{3}{19}}\right) - \sum_{\substack{(t_1, t_2) \in B \\ \frac{3}{19} \leqslant t_3 < \min\left(t_2, \frac{1}{2}(1-t_1-t_2) \right) }} \psi\left(\frac{n}{\beta p_2 p_3}, x^{\frac{3}{19}} \right) \\
\nonumber & + \sum_{\substack{(t_1, t_2) \in B \\ \frac{3}{19} \leqslant t_3 < \min\left(t_2, \frac{1}{2}(1-t_1-t_2) \right) \\ \frac{3}{19} \leqslant t_4 < \frac{1}{2}t_1 }} \psi\left(\frac{n}{\beta p_2 p_3 p_4}, p_4 \right) \\
=&\ S_{B1} - S_{B2} + S_{B3},
\end{align}
where $\beta \sim (2x)^{1-t_1 -t_2 -t_3}$ and $\left(\beta, P(p_{3})\right)=1$. We know that Lemma~\ref{l21} holds for $S_{B1}$ since $t_1 < \frac{8}{19}$ and $t_2 < \frac{9}{38}$. By a trivial argument, we know that $\beta$ must be a prime. Then we know that Lemma~\ref{l21} also holds for $S_{B2}$. If we can group $\{0,2,3,4\}$ (where $0$ represents $\beta$) into $I$ and $J$ satisfy the conditions in Lemma~\ref{l23}, then Lemma~\ref{l21} holds for $S_{B3}$. Working as above, we get a loss from $S_{B}$ of
\begin{equation}
\int_{(t_1, t_2, t_3, t_4) \in U_{B3}} \frac{\omega_1 \left(\frac{t_1 - t_4}{t_4}\right) \omega_1 \left(\frac{1 - t_1 - t_2 - t_3}{t_3}\right)}{t_2 t_3^2 t_4^2} d t_4 d t_3 d t_2 d t_1 < 0.013062,
\end{equation}
where
\begin{align}
\nonumber U_{B3}(t_1, t_2, t_3, t_4) :=&\ \left\{ (t_1, t_2) \in B,\ \frac{3}{19} \leqslant t_3 < \min\left(t_2, \frac{1}{2}(1-t_1-t_2)\right), \right. \\
\nonumber & \quad \{1,2,3\} \text{ cannot be partitioned into } I \text{ and } J \text{ in Lemma~\ref{l23}}, \\
\nonumber & \quad \frac{3}{19} \leqslant t_4 < \frac{1}{2}t_1, \\
\nonumber & \quad \{0,2,3,4\} \text{ cannot be partitioned into } I \text{ and } J \text{ in Lemma~\ref{l23}}, \\
\nonumber & \left. \quad \frac{3}{19} \leqslant t_1 < \frac{1}{2},\ \frac{3}{19} \leqslant t_2 < \min\left(t_1, \frac{1}{2}(1-t_1) \right) \right\}.
\end{align}

For $S_{C}$ we can perform neither a straightforward decomposition nor a role-reversal, hence we need to discard the whole regions. We remark that in \cite{BakerIrving} and \cite{Stadlmann525} Heath-Brown's identity was used to deal with $S_{C}$, but we can not do that here since the corresponding ``Polymath Type-III information'' cannot cover all cases after a Heath-Brown decomposition. Discarding the region gives the losses of
\begin{equation}
\int_{\frac{3}{19}}^{\frac{1}{2}} \int_{\frac{3}{19}}^{\min\left(t_1, \frac{1}{2}(1-t_1)\right)} \mathbbm{1}_{(t_1, t_2) \in C} \frac{\omega \left(\frac{1 - t_1 - t_2}{t_2}\right)}{t_1 t_2^2} d t_2 d t_1 < 0.235134.
\end{equation}

Finally, by combining (4)--(10), the total loss is less than
$$
0.000829 + 0.013062 + 0.235134 < 0.25
$$
and the proof of Theorem~\ref{t1} is completed.

\bibliographystyle{plain}
\bibliography{bib}

\end{document}